\newtheorem{thm}{Theorem}
\theoremstyle{definition}
\newtheorem{rmk}{Remark}
\newcommand{\bbP}{\mathbb{P}}
\newcommand{\bbE}{\mathbb{E}}
\newcommand{\bbN}{\mathbb{N}}
\newcommand{\bbR}{\mathbb{R}}
\title{Bernoulli factories and duality in Wright--Fisher and Allen--Cahn models of population genetics}
\author{Jere Koskela${}^{\text{a}, \text{b},*}$, Krzysztof {\L}atuszy\'nski${}^{\text{b}}$, and Dario Span\`o${}^{\text{b}}$\\
	\small jere.koskela@newcastle.ac.uk, k.g.latuszynski@warwick.ac.uk, d.spano@warwick.ac.uk \\
	\small ${}^{\text{a}}$ School of Mathematics, Statistics and Physics, Newcastle University, Newcastle upon Tyne, NE1 7RU, UK \\
	\small ${}^{\text{b}}$ Department of Statistics, University of Warwick, Coventry CV4 7AL, UK \\
	\small *Corresponding author
}
\date{\today}
\begin{document}

\maketitle

\begin{abstract}
Mathematical models of genetic evolution often come in pairs, connected by a so-called duality relation.
The most seminal example are the Wright--Fisher diffusion and the Kingman coalescent, where the former describes the stochastic evolution of neutral allele frequencies in a large population forwards in time, and the latter describes the genetic ancestry of randomly sampled individuals from the population backwards in time.
As well as providing a richer description than either model in isolation, duality often yields equations satisfied by quantities of interest.
We employ the so-called Bernoulli factory---a celebrated tool in simulation-based computing---to derive duality relations for broad classes of genetics models.
As concrete examples, we present Wright--Fisher diffusions with general drift functions, and Allen--Cahn equations with general, nonlinear forcing terms.
The drift and forcing functions can be interpreted as the action of frequency-dependent selection.
To our knowledge, this work is the first time a connection has been drawn between Bernoulli factories and duality in models of population genetics.
\end{abstract}

\textit{Keywords:} Allen--Cahn equation, Bernoulli factory, duality, frequency-dependent selection, Wright--Fisher diffusion

\textit{2020 MSC:} 35C99, 60J70, 60J90, 92D10 

\section{Introduction}

The Bernoulli factory problem is to construct a realisation of a Bernoulli$(f(p))$ random variable (or an $f(p)$-coin) using an almost surely finite number of independent $p$-coins, where $f : [0, 1] \mapsto [0, 1]$ is a known function but $p \in [0, 1]$ is unknown.
The special case $f(p) = 1/2$ was formulated and solved by John von Neumann in 1951 \cite{vonNeumann:1951}.
Later, Keane and O'Brien provided a necessary and sufficient condition for a given function $f$ to have a Bernoulli factory \cite{keane/obrien:1994}.
In brief, $f$ has a Bernoulli factory if and only if it is identically equal to zero or one, or it is continuous and \emph{polynomially bounded}, i.e.
\begin{equation}\label{polynomially_bounded}
\min\{ f( p ), 1 - f( p ) \} \geq \min\{ p, 1 - p \}^n
\end{equation}
for all $p \in [ 0, 1 ]$ and some $n \in \bbN$.
However, the proof of Keane and O'Brien is only partly constructive: it relies on a recursively defined sequence whose explicit solution is intractable.
Constructions of algorithms have relied on approximation of $f$ by Bernstein polynomials, which are naturally associated with $p$-coins \cite{holtzetal:2011, latuszynskietal:2011, nacu/peres:2005}, or on other series expansions of $f$ with non-negative coefficients \cite{mendo:2019}.

Many seminal models of population genetics rely on the random propagation of alleles from one generation to the next.
A prototypical example is the Wright--Fisher model, in which a population of fixed size $N \in \bbN$ evolves in discrete generations.
Individuals carry one of two alleles, $a$ or $A$, and each individual inherits the allele of a parent which it samples independently and uniformly from the previous generation.
The mechanism of sampling alleles by sampling parents ensures that the model carries information of the forward-in-time evolution of allele frequencies, as well as the backward-in-time genealogies of samples of individuals.
Frequently, these two modelling perspectives satisfy a duality relation which renders both models more tractable than they would be in isolation.
We direct interested readers to e.g.\ \cite{Durrett08} for an introduction to Wright--Fisher and genealogical models in population genetics.

In the absence of mutation, inheriting an allele from a uniformly sampled parent models \emph{neutral} evolution, where the conditional mean allele frequency in a generation equals that in the previous generation.
Non-neutral models in which the mean is not constant can be obtained by sampling offspring alleles as $f(p)$-coins when the allele frequency in the previous generation is $p$, and $f$ models so-called \emph{frequency-dependent selection}.
In order to retain the aforementioned backward-in-time genealogical picture and its associated duality, it is desirable to generate the $f(p)$-coins in a given generation by sampling parental alleles from the previous generation.
Since the allele frequency in the parental generation is $p$, parental alleles can be thought of as $p$-coins, motivating a connection to Bernoulli factories.
Our contribution is to use a Bernoulli factory to extend two standard models of population genetics to more general settings than has been done previously.
They are (i) the non-neutral Wright--Fisher diffusion and its ancestral selection graph dual \cite{corderoetal:2022, Etheridge10, casanova/spano:2018, Krone97}, for which the function $f$ models frequency-dependent selection as described above, and (ii) the Allen--Cahn PDE which models stationary allele frequencies in a spatial continuum, in which $f$ appears as external forcing \cite{etheridgeetal:2022}.

The remainder of the manuscript is organised as follows.
In Section \ref{factory}, we review the Bernoulli factory of \cite{keane/obrien:1994} which turns out to be convenient for our purposes.
In Sections \ref{wright_fisher} and \ref{allen_cahn} we introduce the Wright--Fisher diffusion with frequency--dependent selection and the Allen--Cahn equation.
In each case, we also demonstrate how Bernoulli factories facilitate the construction of very large classes of these models.
Section \ref{discussion} concludes with a discussion on connections to earlier results, as well as some potential extensions.

\section{The Keane--O'Brien factory}\label{factory}

Let $f : [0,1] \mapsto [0,1]$ be continuous and polynomially bounded as in \eqref{polynomially_bounded}.
Let $\bar{X}_n( p )$ be the sample mean of $n$ independent $p$-coins.
Define sequences of functions $f_k$ and integers $\eta( f, k )$ as follows: set $f_1( p ) := f( p )$, and
\begin{equation*}
f_{ k + 1 }( p ) := \frac{ 4 }{ 3 } \Bigg( f_k( p ) - \frac{ 1 }{ 4 } \bbP\Big( f_k( \bar{X}_{ \eta( f, k ) }( p ) ) \geq \frac{ 1 }{ 2 } \Big) \Bigg),
\end{equation*}
where each $\eta( f, k )$ is finite, independent of $p,$ and large enough that
\begin{equation*}
  f_k( p ) - \frac{ 1 }{ 4 } \bbP\Big( f_k( \bar{X}_{ \eta( f, k ) }( p ) ) \geq \frac{ 1 }{ 2 } \Big)  \in [0, 3/4].
\end{equation*}
By \cite[page 218]{keane/obrien:1994} such an $\eta( f, k )$ exists, and moreover, each $f_k$ is polynomially bounded.

Let $L \sim \operatorname{Geo}(1/4)$ be independent of all $p$-coins.
Then 
\begin{equation*}
\mathds{1}\Big\{ f_L( \bar{ X }_{ \eta( f, L ) }( p ) ) \geq \frac{ 1 }{ 2 } \Big\} \sim \operatorname{Ber}( f( p ) )
\end{equation*}
is a Bernoulli factory for $f$ \cite[pages 217--219]{keane/obrien:1994} based on the series expansion
\begin{align}
f( p ) &= \sum_{ k = 1 }^{ \infty } \Big( \frac{ 3 }{ 4 } \Big)^{ k - 1 } \frac{ 1 }{ 4 } \bbP\Big( f_k( \bar{X}_{ \eta( f, k ) }( p ) ) \geq \frac{ 1 }{ 2 } \Big) \notag \\
&= \sum_{ k = 1 }^{ \infty } \Big( \frac{ 3 }{ 4 } \Big)^{ k - 1 } \frac{ 1 }{ 4 } \sum_{ j = 0 }^{ \eta( f, k ) } \binom{ \eta( f, k ) }{ j } p^j ( 1 - p )^{ \eta( f, k ) - j } \mathds{ 1 }\Big\{ f_k \Big( \frac{ j }{ \eta( f, k ) } \Big) \geq \frac{ 1 }{ 2 } \Big\}, \label{pls_expansion}
\end{align}
which converges uniformly in $p$. 
Pseudocode for this Bernoulli factory is shown in Algorithm~\ref{alg_1}.
\begin{algorithm}
\caption{Bernoulli factory for continuous $f : [0, 1] \mapsto [0, 1]$ satisfying \eqref{polynomially_bounded}.}
\label{alg_1}
\begin{algorithmic}[1]
\Require Sequences $\{ f_k \}_{k \geq 1}$, $\{ \eta( f, k ) \}_{k \geq 1}$.
\State Sample $L \sim \operatorname{Geo}(1 / 4)$. 
\State Sample $X_{\eta( f, L ) }( p ) \sim \operatorname{Bin}( \eta( f, L ), p )$.
\If{$f_L( X_{\eta( f, L ) } ( p ) / L ) \geq 1/2$} 
	\State Return 1.
\Else
	\State Return 0.
\EndIf
\end{algorithmic}
\end{algorithm}
We will refer to Bernoulli factories based on \eqref{pls_expansion} as Keane--O'Brien factories.
They have two features which which will turn out to be essential.
\begin{rmk}\label{rmk_1}
The Keane--O'Brien factory is defined for all $p \in [ 0, 1 ]$ and $f( p ) \in [ 0, 1 ]$.
Factories based on Bernstein polynomial approximation typically have to constrain the range (and sometimes the domain) of $f$ to (subsets of) $(0, 1)$ to avoid degenerate distributions \cite{holtzetal:2011, latuszynskietal:2011, nacu/peres:2005}.
In population genetics applications, it is essential to allow the whole range $p \in [0, 1]$, and desirable to allow $f( 0 ) = 0$ and $f( 1 ) = 1$ to model fixation.
\end{rmk}
\begin{rmk}\label{rmk_2}
The random variable $L$, and hence the number of $p$-coins $\eta( f, L )$ needed to determine the allele of a child, is independent of the realisations of those coins.
This will facilitate the construction of ancestral graphs containing all possible ancestors before the alleles carried by any of those ancestors are known.
\end{rmk}
Any Bernoulli factory for which Remark \ref{rmk_2} holds can be thought of as a mixture of finite factories, that is, a mixture of Bernoulli factories with a deterministic number of coins each.

\section{The frequency-dependent Wright--Fisher diffusion} \label{wright_fisher}

Consider a population of $N$ individuals $\{ \mathbf{Z}_k^{ ( N ) } \}_{ k \geq 0 }$ evolving in discrete time, where the state of the system at time $k$ is $\mathbf{ Z }_k^{ ( N ) } := ( Z_1^{ ( N ) }( k ), \ldots, Z_N^{ ( N ) }( k ) )$ with $Z_k^{ ( N ) } \in \{ a, A \}$.
Let 
\begin{equation*}
\tilde{ Y }_k^{ ( N ) } := \frac{ 1 }{ N } \sum_{ i = 1 }^N \mathds{ 1 }_{ \{ A \} }( Z_i^{ ( N ) }( k ) )
\end{equation*}
be the frequency of the $A$ allele in generation $k$.
Each individual in generation $k + 1$ samples its allele conditionally independently given $\tilde{ Y }_k^{ ( N ) }$, where the probability of an $A$ allele is $f^{ ( N ) }( Y_k^{ ( N ) } )$ with
\begin{equation*}
f^{ ( N ) }( p ) := \Big( 1 - \frac{ \sigma }{ N } \Big) p + \frac{ \sigma }{ N } f( p )
\end{equation*}
for a fixed constant $\sigma > 0$, a continuous function $f : [ 0, 1 ] \mapsto [ 0, 1 ]$ satisfying \eqref{polynomially_bounded}, and where we assume the population size is large enough that $N \geq \sigma$.

Concretely, the realisation of each allele is implemented as follows.
With probability $1 - \sigma / N$, a given individual samples one parent and inherits its allele.
With the complementary probability, the individual samples an independent copy of $L$, followed by $\eta( f, L )$ parents chosen uniformly at random with replacement.
Conditional on $L$ and $\tilde{ Y }_k^{ ( N ) }$, the allele of the individual is $A$ if event $\{ f_L( \bar{ X }_{ \eta( f, L ) }( \tilde{ Y }_k^{ ( N ) } ) ) \geq 1 / 2 \}$ happens.
By \eqref{pls_expansion}, the marginal probability of an $A$ allele is $f^{ ( N ) }( \tilde{ Y }_k^{ ( N ) } )$.
A realisation of the process of children sampling parents is depicted in Figure \ref{wright_fisher_fig}.

\begin{figure}[!h]
	\centering
	\begin{tikzpicture}
		\node[circle,draw] (X10) {};
		\node[circle,draw] (X20) [below=of X10] {};
		\node[circle,draw] (X30) [below=of X20] {};
		\node[circle,draw] (X40) [below=of X30] {};
		\node[circle,draw] (X11) [right=of X10] {};
		\node[circle,draw] (X21) [below=of X11] {};
		\node[circle,draw] (X31) [below=of X21] {};
		\node[circle,draw] (X41) [below=of X31] {};
		\node[circle,draw] (X12) [right=of X11] {};
		\node[circle,draw] (X22) [below=of X12] {};
		\node[circle,draw] (X32) [below=of X22] {};
		\node[circle,draw] (X42) [below=of X32] {};
		\node[circle,draw] (X13) [right=of X12] {};
		\node[circle,draw] (X23) [below=of X13] {};
		\node[circle,draw] (X33) [below=of X23] {};
		\node[circle,draw] (X43) [below=of X33] {};
		\node[circle,draw] (X14) [right=of X13] {};
		\node[circle,draw] (X24) [below=of X14] {};
		\node[circle,draw] (X34) [below=of X24] {};
		\node[circle,draw] (X44) [below=of X34] {};
		\path (X20) edge[-latex] (X10);
		\path (X21) edge[-latex] (X10);
		\path (X22) edge[-latex] (X12);
		\path (X23) edge[-latex] (X13);
		\path (X24) edge[-latex] (X13);
		\path (X30) edge[-latex] (X20);
		\path (X31) edge[-latex] (X21);
		\path (X32) edge[-latex] (X22);
		\path (X33) edge[-latex] (X22);
		\path (X33) edge[-latex, in=-70, out=70] (X23);
		\path (X33) edge[-latex, in=-110, out = 110] (X23);
		\path (X34) edge[-latex] (X24);
		\path (X40) edge[-latex] (X31);
		\path (X41) edge[-latex] (X31);
		\path (X42) edge[-latex] (X31);
		\path (X43) edge[-latex] (X33);
		\path (X44) edge[-latex] (X34);	
	\end{tikzpicture}
\caption{Illustration of $\{ \mathbf{Z}_k^{ ( N ) } \}_{ k \geq 0 }$ with $N = 5$ and 4 generations.
The second generation from the bottom includes a merger of three lineages, as well as an individual which samples three parents, two of which ended up overlapping.
The generation at the top includes two binary mergers.
Alleles for all individuals in all generations could be generated given alleles for the generation at the top, as well as a rule for deciding the allele of the three-parent child.}
\label{wright_fisher_fig}
\end{figure}
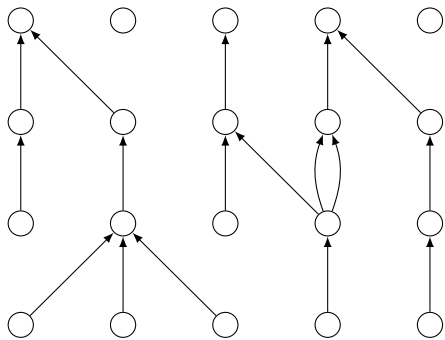

Sampling parents with replacement is essential for the connection with Bernoulli factories, as each parent can then be thought of as an i.i.d.\ coin on the two-point space of alleles $\{a, A\}$.
However, sampling with replacement also permits a child to choose the same individual as a parent many times, which is unnatural from a biological point of view.
Formally, the $\eta(f, L)$ parents are merely a mechanism for generating alleles for children in a parent-driven way which turns out to facilitate duality with a genealogical process.
A biologically interpretable genealogy with one parent per child can be recovered by selecting, for each multi-parent child, one of the child-parent connections to denote the ``real" parent, with all other child-parent connections designated ``virtual" and hence non-biological.
The way in which the real parent is chosen can be arbitrary as long as it has no impact on the distribution of the child allele, and is independent of the child-parent connections of other individuals.
However, retention of virtual parents is essential for constructing both the genealogical and allele frequency processes simultaneously, and hence obtaining the duality result in Theorem \ref{duality_theorem}.

Our aim is to verify convergence of the allele frequency process $\{ \tilde{ Y }_k^{ ( N ) } \}_{ k \geq 0 }$ in the infinite population limit.
To that end, we define the continuous-time process $( Y_t^{ ( N ) } )_{ t \geq 0 }$ via
\begin{equation*}
Y_t^{ ( N ) } := \tilde{ Y }_{ \lfloor N t \rfloor }^{ ( N ) }.
\end{equation*}
\begin{thm}\label{wf_theorem}
Let $f$ be polynomially bounded and Lipschitz continuous on $[0, 1]$.
Then, in the Skorokhod topology, $( Y_t^{ ( N ) } )_{ t \geq 0 } \to ( Y_t )_{ t \geq 0 }$ weakly as $N \to \infty$, where $Y_t$ solves
\begin{equation}\label{wf_diffusion}
\mathrm{d}Y_t = \sigma ( f( Y_t ) - Y_t ) \mathrm{d}t + \sqrt{ Y_t ( 1 - Y_t ) } \mathrm{d}W_t
\end{equation}
subject to appropriate initial conditions, where $( W_t )_{ t \geq 0 }$ is a Brownian motion.
\end{thm}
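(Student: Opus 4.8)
The plan is to prove the convergence through the standard generator (martingale-problem) route for diffusion approximations, in the framework of Ethier and Kurtz. The rescaled process $(Y_t^{(N)})_{t \geq 0}$ is a time-homogeneous, piecewise-constant Markov process on the lattice $\{0, 1/N, \ldots, 1\} \subset [0,1]$, whose embedded chain $\{\tilde{Y}_k^{(N)}\}$ takes one step per $1/N$ units of rescaled time. Conditionally on $\tilde{Y}_k^{(N)} = p$, the next state $\tilde{Y}_{k+1}^{(N)}$ is the mean of $N$ i.i.d.\ $\Ber(f^{(N)}(p))$ variables, so I would first record the conditional moments of the increment $\Delta := \tilde{Y}_{k+1}^{(N)} - p$. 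A direct computation gives $\bbE[\Delta \mid \tilde{Y}_k^{(N)} = p] = f^{(N)}(p) - p = (\sigma/N)(f(p) - p)$ and $\Var[\Delta \mid \tilde{Y}_k^{(N)} = p] = f^{(N)}(p)(1 - f^{(N)}(p))/N$, while the third absolute moment of the centred scaled binomial is of order $N^{-3/2}$, uniformly in $p$.

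Next I would establish generator convergence. The generator of the limit diffusion \eqref{wf_diffusion} is
\[
\calL g(y) = \sigma\big(f(y) - y\big) g'(y) + \tfrac{1}{2} y(1-y) g''(y),
\]
and the discrete generator is $\mathcal{A}_N g(p) = N\, \bbE[g(\tilde{Y}_{k+1}^{(N)}) - g(p) \mid \tilde{Y}_k^{(N)} = p]$. Taylor expanding $g \in C^\infty([0,1])$ to second order and multiplying by $N$ yields
\[
\mathcal{A}_N g(p) = g'(p)\, N\bbE[\Delta] + \tfrac{1}{2} g''(p)\, N\bbE[\Delta^2] + N\, O\big(\bbE[|\Delta|^3]\big),
\]
where $N\bbE[\Delta] = \sigma(f(p)-p)$ exactly, $N\bbE[\Delta^2] = f^{(N)}(p)(1 - f^{(N)}(p)) + O(N^{-1}) \to p(1-p)$, and the remainder is $N\cdot O(N^{-3/2}) = O(N^{-1/2})$. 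Lipschitz continuity of $f$ renders all these estimates uniform in $p \in [0,1]$, so $\sup_{p}|\mathcal{A}_N g(p) - \calL g(p)| \to 0$ for every $g$ in the core $C^\infty([0,1])$. Because the state space $[0,1]$ is compact, the compact-containment condition holds trivially, and uniform generator convergence on a core then furnishes relative compactness of $(Y^{(N)})_N$ in the Skorokhod topology.

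Finally I would identify every subsequential limit as the unique solution of \eqref{wf_diffusion}. Existence of a solution to the $\calL$-martingale problem follows from the generator convergence together with the relative compactness just obtained; the crux is uniqueness, which I expect to be the main obstacle. The diffusion coefficient $\sqrt{y(1-y)}$ is only H\"older-$1/2$ and degenerates at the boundary $\{0,1\}$, so the standard Lipschitz SDE theory does not apply. I would resolve this by invoking Yamada--Watanabe pathwise uniqueness, whose $1/2$-H\"older condition on the diffusion coefficient is met exactly, the Lipschitz drift $\sigma(f(\cdot) - \cdot)$ causing no additional difficulty; \eqref{wf_diffusion} is, after all, the classical Wright--Fisher diffusion with an extra Lipschitz selection drift, for which well-posedness is known. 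With the limiting martingale problem well-posed, the convergence theorem of Ethier and Kurtz upgrades relative compactness to weak convergence $Y^{(N)} \Rightarrow Y$ in the Skorokhod topology, completing the proof.
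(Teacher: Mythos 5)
Your proposal is correct and follows essentially the same route as the paper: both compute the one-step transition (your observation that the next generation is a single $\Bin(N, f^{(N)}(p))/N$ draw is an equivalent, cleaner form of the paper's explicit triple sum over neutral and selective offspring), Taylor-expand a smooth test function to get uniform convergence of $N(L^N)$ to the generator $\sigma(f(y)-y)h' + \tfrac12 y(1-y)h''$, and combine this with well-posedness of the limiting diffusion. The only differences are in which standard tools close the argument---the paper cites \cite[Theorem 2.8]{ethier/kurtz:1986} for existence, uniqueness and the Feller property where you invoke Yamada--Watanabe, and it finishes with Kallenberg's pseudo-Poisson approximation rather than your relative-compactness-plus-martingale-problem identification---and these are interchangeable.
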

\begin{proof}
Existence, uniqueness, and the Feller property of the putative limiting process all follow from \cite[Theorem 2.8]{ethier/kurtz:1986} because the drift function $\sigma ( f( y ) - y )$ is Lipschitz continuous.

The discrete-time process $\bar{Y}_k^{ ( N ) }$ has discrete generator
\begin{align*}
L^N h( y ) := \sum_{ k = 0 }^N &\binom{ N }{ k } \Big( \frac{ \sigma }{ N } \Big)^k \Big( 1 - \frac{ \sigma }{ N } \Big )^{ N - k } \sum_{ x = 0 }^{ N - k } \binom{ N - k }{ x } y^x ( 1 - y )^{ N - k - x }  \\
&\times \sum_{ z = 0 }^k \binom{ k }{ z } f( y )^z ( 1 - f( y ) )^{ k - z } h\Big( \frac{ x + z }{ N } \Big) - h( y ),
\end{align*}
which is well-defined for any bounded, measurable function $h : [ 0, 1 ] \to \bbR$.
To show the claimed convergence, let $h$ be a $C^2( [ 0, 1 ] )$ function with bounded third derivatives, which is a convergence-determining class.
Expanding $h$ around $y$ on the right-hand side yields
\begin{align*}
L^N h( y ) = {}&\sum_{ k = 0 }^N \binom{ N }{ k } \Big( \frac{ \sigma }{ N } \Big)^k \Big( 1 - \frac{ \sigma }{ N } \Big )^{ N - k } \sum_{ x = 0 }^{ N - k } \binom{ N - k }{ x } y^x ( 1 - y )^{ N - k - x } \\
&\times \sum_{ z = 0 }^k \binom{ k }{ z } f( y )^z ( 1 - f( y ) )^{ k - z } \Bigg[ \Big( \frac{ x + z }{ N } - y \Big) h'( y ) + \frac{ 1 }{ 2 } \Big( \frac{ x + z }{ N } - y \Big)^2 h''( y ) \Bigg],
\end{align*}
up to terms which are of lower order since $h'''$ is bounded.
Evaluating the binomial expectations yields
\begin{equation*}
N L^N h( y ) = \sigma ( f( y ) - y ) h'( y ) + \frac{ 1 }{ 2 } y ( 1 - y ) h''( y ) + o( 1 ),
\end{equation*}
as $N \to \infty$.
The proof of weak convergence is completed by using the pseudo-Poisson approximation of \cite[Theorem 19.28]{kallenberg:2002} with time step $1 / N$, because the limiting diffusion is Feller.
\end{proof}

\begin{rmk}
The requirement of a Lipschitz drift could be slightly relaxed by using the more cumbersome conditions for a Feller semigroup given in \cite[Theorem 3.3]{xietal:2019}.
They cover drifts satisfying a condition akin to
\begin{equation*}
| f( y ) - f( z ) | \leq - C | y - z | \log( | y - z | ),
\end{equation*}
or small variations thereof, where $C > 0$ is a constant.
See \cite[equations (14) and (15), as well as Remark 2.3]{xietal:2019} for a precise class of non-Lipschitz functions which can be handled.
\end{rmk}

Next we consider a sample of $n \in \bbN$ individuals from a given generation (which we arbitrarily set as generation 0) in the pre-limiting Wright--Fisher model $\{ \mathbf{Z}_k^{ ( N ) } \}_{ k \geq 0 }$.
We define the \emph{ancestral process} $A_k^{ ( N ) }$ as the number of lineages which are ancestral to the sample $k$ generations in the past.
The number of lineages decreases by one when two lineages find a common ancestor, decreases by more than one if multiple lineages find common ancestors (which will happen with negligible probability when $N \to \infty$), or increase by $\eta( f, L ) - 1$ whenever a lineage samples $\eta( f, L )$ ancestors.
In the pre-limiting particle system, any number of these events can co-occur in one generation, particularly when $n \geq 3$.
But transitions other than isolated binary mergers and single multifurcations turn out to vanish in a suitably rescaled infinite population limit.
To that end, we define the continuous time Markov jump process
\begin{equation}\label{ancestral_limit}
A_t := \lim_{ N \to \infty } A_{ \lfloor N t \rfloor }^{ ( N ) }
\end{equation}
whose existence we prove next.
\begin{thm}\label{ancestral_thm}
The limit in \eqref{ancestral_limit} exists, and $( A_t )_{ t \geq 0 }$ has generator
\begin{equation*}
G h( n ) = \binom{ n }{ 2 } [ h( n - 1 ) - h( n ) ] + \sigma n \sum_{ k = 1 }^{ \infty } \Big( \frac{ 3 }{ 4 } \Big)^{ k - 1 } \frac{ 1 }{ 4 } [ h( n + \eta( f, k ) - 1 ) - h( n ) ].
\end{equation*}
\end{thm}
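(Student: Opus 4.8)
The plan is to derive the generator of $(A_t)_{t \geq 0}$ by first writing down the one-generation transition probabilities of the discrete ancestral process $A_k^{(N)}$, then computing the rescaled generator $N \cdot \bbE[h(A_{k+1}^{(N)}) - h(A_k^{(N)}) \mid A_k^{(N)} = n]$ and taking $N \to \infty$, and finally invoking a standard generator-convergence theorem for Markov jump processes (again the pseudo-Poisson machinery of \cite[Theorem 19.28]{kallenberg:2002} used in Theorem \ref{wf_theorem}) to upgrade convergence of generators to the weak convergence of processes asserted in \eqref{ancestral_limit}. The two event types that survive the limit are isolated binary mergers and single multifurcations, so the heart of the argument is a bookkeeping of which ancestral transitions occur at rate $\Theta(1)$ after multiplying by $N$ and which are $o(1)$.

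\textbf{Binary mergers.} First I would handle the coalescent term. With $n$ lineages present, each pair independently finds a common parent in the previous generation with probability $1/N$ (since each lineage's ``real'' parent is chosen uniformly among $N$ individuals), so the expected number of pairwise mergers in one generation is $\binom{n}{2}/N + O(N^{-2})$. Multiplying the generator by $N$ gives the rate $\binom{n}{2}$ for the transition $n \mapsto n-1$, matching the first term. I would argue that simultaneous mergers, or mergers of three or more lineages, contribute only $O(N^{-2})$ and hence vanish after rescaling; this is the standard Kingman-coalescent estimate and requires only that the number of lineages $n$ stays bounded on the relevant time scale.

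\textbf{Multifurcations.} The new ingredient is the branching caused by the Bernoulli factory. Each of the $n$ lineages independently undergoes the ``selective'' event with probability $\sigma/N$, in which case it samples $L \sim \operatorname{Geo}(1/4)$ and then $\eta(f, L)$ parents, increasing the lineage count by $\eta(f,L) - 1$. Conditioning on $L = k$, which has probability $(3/4)^{k-1}(1/4)$ by Remark \ref{rmk_2}, the one-generation expected change in $h$ from a single lineage multifurcating is $(\sigma/N)\sum_{k=1}^\infty (3/4)^{k-1}(1/4)[h(n + \eta(f,k) - 1) - h(n)] + O(N^{-2})$. Summing over the $n$ lineages and multiplying by $N$ produces exactly the second term of $Gh(n)$. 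The crucial point, guaranteed by Remark \ref{rmk_2}, is that $L$ (and hence $\eta(f,L)$) is chosen \emph{independently} of the realised parental alleles, so the branching structure of the ancestral process is well-defined before any alleles are revealed, and the geometric law of $L$ transfers directly into the jump distribution of the limit.

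\textbf{Main obstacle.} The step I expect to require the most care is justifying that the limiting jump rates are genuinely finite, i.e.\ that $\sum_{k=1}^\infty (3/4)^{k-1} \eta(f,k) < \infty$, so that the generator $G$ is conservative and defines a genuine (non-explosive) Markov jump process. The integers $\eta(f,k)$ from the Keane--O'Brien construction can grow with $k$, and if they grow too fast the mean number of new lineages per multifurcation could diverge, invalidating the pseudo-Poisson approximation and potentially leading to an explosive ancestral process. I would need to appeal to the growth bounds on $\eta(f,k)$ implicit in \cite[page 218]{keane/obrien:1994}---or impose an explicit summability hypothesis---to control this tail and to ensure that the $o(1)$ and $O(N^{-2})$ error terms can be summed uniformly. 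Once finiteness of the rates is established, the remaining verification that the discrete generators converge to $G$ and that the limit process exists is routine.
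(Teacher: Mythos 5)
Your proposal follows essentially the same route as the paper: a bookkeeping of one-generation transition probabilities showing that isolated binary mergers (probability $( 1 - \sigma / N )^2 / N$) and single multifurcations (probability $( \sigma / N ) ( 3/4 )^{ k - 1 } / 4$) are the only events of order $\Theta( 1 / N )$, with simultaneous or multiple mergers, multiple branchings, and merger--branching coincidences all $o( 1 / N )$. One remark on your ``main obstacle'': the summability of $\sum_{ k } ( 3/4 )^{ k - 1 } \eta( f, k )$ is not needed for $G$ to be well defined on bounded $h$, since the total jump rate out of state $n$ is $\binom{ n }{ 2 } + \sigma n$ no matter how fast $\eta( f, k )$ grows; the genuine residual issue is non-explosion of the resulting jump process, which the paper's proof also leaves implicit.
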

\begin{proof}
The pre-limiting, discrete-time process $\{ A_{ k }^{ ( N ) } \}_{ k \geq 0 }$ undergoes a myriad of transitions involving subsets of individuals finding common ancestors, as well as individuals branching into many potential ancestors, potentially in the same generation.
However, only transitions with a per-generation probability $\Theta(1 / N)$ will contribute to the time-rescaled limit with a finite rate.
Events with probability $o(1/N)$ will not occur in the limit at all, while events with probability $\omega( 1 / N )$ will appear at a dense set of times.
However, it will turn out that the latter only result in identity transitions in $A_t$, and hence do not affect the limit.

The probability of two lineages originating from a common ancestor one generation earlier, with neither being involved in a branching event, is
\begin{equation}\label{merger_rate}
\Big( 1 - \frac{ \sigma }{ N } \Big)^2 \frac{ 1 }{ N } = \frac{ 1 }{ N } + o( 1 / N ),
\end{equation}
implying that two lineages will merge to a common ancestor at rate 1 in the limit.
A triple merger, or more than one simultaneous merger, has probability at most
\begin{equation*}
\Big( 1 - \frac{ \sigma }{ N } \Big)^3 \frac{ 1 }{ N^2 } = o( 1 / N ),
\end{equation*}
and hence will not appear in the limit.

A single individual branches into $\eta( f, k )$ ancestral lineages with probability
\begin{equation}\label{branching_rate}
\frac{ \sigma }{ N } \Big( \frac{ 3 }{ 4 } \Big)^{k - 1} \frac{ 1 }{ 4 },
\end{equation}
while any event involving more than two lineages branching in one generation has probability at most
\begin{equation*}
\Big( \frac{ \sigma }{ N } \Big)^2 = o( 1 / N ).
\end{equation*}
Hence, only isolated branching events appear in the limit.
It is also clear from \eqref{merger_rate} and \eqref{branching_rate} that the probability of at least one merger and branching event in one generation is $o( 1 / N )$.

All other transitions involve no mergers or branching events, and hence do not affect the limiting ancestral process.
Noting that there are $\binom{n}{2}$ pairs of individuals to merge with probability \eqref{merger_rate} and $n$ individuals to branch with probabilities \eqref{branching_rate} yields the claimed generator $G$.
\end{proof}

Duality between the Wright--Fisher diffusion \eqref{wf_diffusion} and the ancestral process \eqref{ancestral_limit} is a relation
\begin{equation*}
\bbE_y[ h( Y_t, n ) | Y_0 = y ] = \bbE_n[ h( y, A_t ) | A_0 = n ],
\end{equation*}
where $y \in [ 0, 1 ]$ and $n \in \bbN$ are respective initial conditions, and $h$ is a \emph{duality function}, the specification of which will require some exposition.

We follow \cite{corderoetal:2022} and define the random function $P_t( y )$ as the conditional probability that all $n$ leaves at time zero in the ancestral process carry allele $A$, given that the $A$ allele frequency at time $t$ in the past is $y \in [0, 1]$, and given the realisation of the ancestral process $A_{ 0 : t } := ( A_s )_{ s \in [ 0, t ] }$ started from the $n$ lineages.
For example, in the absence of branching events we have $P_t( y ) = y^{ A_t }$, while when $n = 1$, a single branching event into $\eta( f, k )$ ancestors and no mergers in the history $A_{ 0 : t }$ yields
\begin{equation*}
P_t( y ) := \sum_{ j = 0 }^{ \eta( f, k ) } \binom{ \eta( f, k ) }{ j } y^j ( 1 - y )^{ \eta( f, k ) - j } \mathds{ 1 }\Big\{ f_k \Big( \frac{ j }{ \eta( f, k ) } \Big) \geq \frac{ 1 }{ 2 } \Big\},
\end{equation*}
where $f_k$ is as specified in Section \ref{factory}.
Other patterns of merger and branching events will result in a more complicated Bernstein polynomial of degree $A_t$,
\begin{equation*}
P_t( y ) := \sum_{ j = 0 }^{ A_t } V_t( j ) \binom{ A_t }{ j } y^j ( 1 - y )^{ A_t - j },
\end{equation*}
where $V_t( j )$ is the random Bernstein coefficient which equals the probability that the $n$ leaves all carry allele $A$, given that $j$ of $A_t$ roots do.
As in \cite[Definition 2.12 and Proposition 2.13]{corderoetal:2022}, the vector $( \bm{V}_t )_{ t \geq 0 } := ( V_t( 0 ), \ldots, V_t( n ) )_{ t \geq 0 }$ is also a Markov jump process with transitions
\begin{align}
\bm{v} \mapsto \Bigg( \sum_{ j = 0 }^{ i \wedge \eta( f, k ) } \frac{ \binom{ i }{ j } \binom{ n + \eta( f, k ) - 1 - j }{ \eta( f, k ) - j } }{ \binom{ n + \eta( f, k ) - 1 }{ i } } [ &\mathds{1}\{ f_h( j / \eta( f, k ) ) \geq 1/2 \} v_{ i + 1 - j } \notag \\
&+ \mathds{1}\{ f_k( j / \eta( f, k ) ) < 1/2 \} v_{ i - j } ] \Bigg)_{ i = 0 }^{ n + \eta( f, k ) - 1 } \label{branching_mechanism}
\end{align}
at rate $n \sigma \bbP( L = k )$, and 
\begin{equation}\label{coalescing_mechanism}
\bm{v} \mapsto \Big( \frac{ i }{ n - 1 } v_{ i + 1 } + \frac{ n - 1 - i }{ n - 1 } v_i \Big)_{ i = 0 }^{ n - 1 }
\end{equation}
at rate $\binom{ n }{ 2 }$.
Also, let
\begin{equation*}
\bm{B}_n( x ) := \Bigg( \binom{ n }{ i } x^i ( 1 - x )^{ n - i } \Bigg)_{ i = 0 }^n
\end{equation*}
be the vector of order $n$ Bernstein polynomials, and for vectors $\bm{u}$ and $\bm{v}$ of common length $n + 1$, let 
\begin{equation*}
\langle \bm{u}, \bm{v} \rangle := \sum_{ i = 0 }^n u_i v_i
\end{equation*}
be the usual inner product.

\begin{thm}\label{duality_theorem}
The processes $( Y_t )_{ t \geq 0 }$ and $( V_t )_{ t \geq 0 }$ are dual with duality function
\begin{equation*}
H : ( y, \bm{ v } ) \mapsto \langle \bm{B}_{ \text{dim}( \bm{v} ) - 1 }( y ), \bm{v} \rangle,
\end{equation*}
in that
\begin{equation}\label{bernstein_duality}
\bbE[ \langle \bm{ B }_n( Y_t ), \bm{v} \rangle | Y_0 = y ] = \bbE[ \langle \bm{ B }_{ \text{dim}( \bm{ V }_t ) - 1 }( y ), \bm{ V }_t \rangle | \bm{ V }_0 = \bm{ v } ]
\end{equation}
for all $t \geq 0$, each $y \in [ 0, 1 ]$, every $n \in \bbN$, and each $\bm{v} \in \bbR^{ n + 1 }$.
\end{thm}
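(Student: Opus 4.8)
The plan is to prove the duality at the level of generators, i.e.\ to show that the diffusion generator $\calL_Y$ acting on $y \mapsto H(y, \bm v)$ coincides with the jump generator $\calL_V$ of $(\bm V_t)_{t\ge 0}$ acting on $\bm w \mapsto H(y, \bm w)$, for every fixed $y \in [0,1]$ and $\bm v \in \bbR^{n+1}$:
\begin{equation*}
\calL_Y H(\cdot, \bm v)(y) = \calL_V H(y, \cdot)(\bm v).
\end{equation*}
Once this pointwise identity is in hand, \eqref{bernstein_duality} follows from the standard generator criterion for duality (the argument underlying the Bernstein duality of \cite{corderoetal:2022}, or \cite[Section 4.4]{ethier/kurtz:1986}): one applies Dynkin's formula to $t \mapsto H(Y_t, \bm v)$ and to $t \mapsto H(y, \bm V_t)$ and equates, provided the relevant integrability holds. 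Here $H(\cdot, \bm v)$ is a polynomial, hence lies in the domain of the Feller generator $\calL_Y$ furnished by Theorem \ref{wf_theorem}, and the total jump rate $\binom n2 + n\sigma$ out of any state of fixed dimension is finite, so both sides are well defined once the infinite branching sum is shown to converge.

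Because the generators split additively into a neutral (diffusion / coalescence) and a selective (drift / branching) component, and $H$ is linear in its second argument, I would verify the identity as two separate matchings. For the neutral part I would establish
\begin{equation*}
\tfrac12 y(1-y)\partial_y^2 \langle \bm B_n(y), \bm v\rangle = \binom n2\big[\langle \bm B_{n-1}(y), \bm v'\rangle - \langle \bm B_n(y), \bm v\rangle\big],
\end{equation*}
where $\bm v'$ is the image of $\bm v$ under \eqref{coalescing_mechanism}. Using the Bernstein derivative identity $\partial_y^2 B_i^{(n)} = n(n-1)(B_{i-2}^{(n-2)} - 2B_{i-1}^{(n-2)} + B_i^{(n-2)})$ together with $y(1-y)B_j^{(n-2)} = \tfrac{(j+1)(n-1-j)}{n(n-1)}B_{j+1}^{(n)}$, the left side reduces to $\sum_m \tfrac{m(n-m)}2(v_{m+1}-2v_m+v_{m-1})B_m^{(n)}(y)$; degree-elevating the right side from $n-1$ to $n$ and simplifying yields the same coefficients, recovering the classical moment duality of the neutral Wright--Fisher diffusion and Kingman's coalescent expressed in the Bernstein basis.

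The selective part is the crux. Here I would feed the Keane--O'Brien expansion \eqref{pls_expansion} into the drift: writing $g_k(y) := \sum_{j} \mathds{1}\{f_k(j/\eta(f,k)) \ge 1/2\} B_j^{(\eta(f,k))}(y)$, the series gives $f(y) = \sum_k \bbP(L=k) g_k(y)$, and since $\sum_k \bbP(L=k) = 1$ the drift becomes
\begin{equation*}
\sigma(f(y)-y)\,\partial_y\langle \bm B_n(y),\bm v\rangle = \sigma n \sum_{k=1}^\infty \bbP(L=k)\,\big(g_k(y) - y\big)\sum_{i=0}^{n-1}(v_{i+1}-v_i)B_i^{(n-1)}(y).
\end{equation*}
Matching this against the branching contribution to $\calL_V H$ term by term in $k$ then amounts to a polynomial identity in the degree-$(n+\eta(f,k)-1)$ Bernstein basis. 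I would expand the products via $B_a^{(r)}B_b^{(s)} = \binom ra\binom sb \binom{r+s}{a+b}^{-1} B_{a+b}^{(r+s)}$ and degree-elevate the $-y$ term, then read off the coefficient of each $B^{(n+\eta(f,k)-1)}(y)$. The combinatorial prefactors that emerge should be exactly the hypergeometric sampling weights appearing in \eqref{branching_mechanism} (summing to one by Vandermonde's identity), while the indicator $\mathds{1}\{f_k(j/\eta(f,k)) \ge 1/2\}$ produces precisely the unit index shift selecting $v_{i+1-j}$ over $v_{i-j}$, and the leftover $-y$ piece combines with $-\langle \bm B_n(y), \bm v\rangle$ to account for the non-branching terms. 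This bookkeeping, together with the interchange of the $k$-sum and the generator (justified by the uniform convergence of \eqref{pls_expansion}), is where I expect the real work to lie.

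Finally, to license the passage from the generator identity to \eqref{bernstein_duality} I would record the uniform bound that makes Dynkin's formula applicable: both \eqref{branching_mechanism} and \eqref{coalescing_mechanism} are stochastic linear maps, so $\|\bm V_t\|_\infty$ is almost surely non-increasing, whence $|H(y,\bm V_t)| = |\langle \bm B_{\dim(\bm V_t)-1}(y), \bm V_t\rangle| \le \|\bm v\|_\infty$ for all $t$ by the partition-of-unity property $\sum_i B_i^{(n)}(y) = 1$. This controls the duality function even though branching lets $\dim(\bm V_t)$ grow, and reduces the outstanding integrability requirement to non-explosion of $(\bm V_t)_{t \ge 0}$, which holds since the total branching rate $n\sigma$ is linear in the number of lineages.
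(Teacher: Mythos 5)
Your proposal is correct and follows essentially the same route as the paper: a generator-level duality check split into a neutral (coalescence vs.\ diffusion) part, which the paper declares standard and omits, and a selective (branching vs.\ drift) part handled by feeding the Keane--O'Brien expansion \eqref{pls_expansion} into the drift and reallocating coins between the $\eta(f,k)$-group and the $(n-1)$-group. Your Bernstein product-and-degree-elevation identity is exactly the paper's binomial--hypergeometric distributional identity \eqref{binomial_hypergeometric_identity} written in analytic rather than probabilistic form, and your closing remarks on the uniform bound $|H(y,\bm{V}_t)| \leq \| \bm{v} \|_\infty$ and non-explosion supply details the paper delegates to the cited proof of \cite[Theorem 2.14]{corderoetal:2022}.
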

\begin{proof}
The proof is a small adaptation of that of \cite[Theorem 2.14]{corderoetal:2022} to our more general setting.
Duality between the coalescing mechanism of the ancestral process \eqref{coalescing_mechanism} and the diffusion coefficient of the Wright--Fisher diffusion \eqref{wf_diffusion} is standard, and we omit it to focus on establishing the same relation for the branching mechanism \eqref{branching_mechanism} and the drift term in \eqref{wf_diffusion}.

Following \cite[Section 4.3]{corderoetal:2022}, let $Y_k^y \sim \text{Bin}( k, y )$ and $K_{ k, i }^n \sim \text{Hyp}( n + k - 1, k, i )$ be independent random variables, where $\text{Hyp}( a, b, c )$ denotes the hypergeometric distribution with $b$ draws without replacement from a population of size $a$ containing $c$ successes.
Then, for $\bm{v} \in \bbR^{ n + 1 }$ we have
\begin{align*}
\partial_y H( y, \bm{v} ) &= n \bbE[ v_{ Y_{ n - 1 }^y + 1 } - v_{ Y_{ n - 1 }^y } ], \\
H( y, \bm{v} ) &= \bbE[ v_{ Y_n^y } ] = \bbE[ ( 1 - y ) v_{ Y_{ n - 1 }^y } + y v_{ Y_{ n - 1 }^y + 1 } ], \\
f( y ) &= \bbE[ \mathds{ 1 }\{ f_L( \bar{ X }_{ \eta( f, L ) }( y ) ) \geq 1 / 2 \} ],
\end{align*}
where $L \sim \text{Geo}(1/4)$ and the third equality is due to \eqref{pls_expansion}.
We also have
\begin{equation}\label{binomial_hypergeometric_identity}
( \eta( f, L ) \bar{ X }_{ \eta( f, L ) }( y ), Y_{ n - 1 }^y ) \stackrel{d}{=} ( K_{ \eta( f, L ), Y_{ n + \eta( f, L ) - 1 }^y }^n, Y_{ n + \eta( f, L ) - 1 }^y - K_{ \eta( f, L ), Y_{ n + \eta( f, L ) - 1 }^y }^n ),
\end{equation}
as can be seen by considering the event that the left-hand side takes value $( j, k )$, which necessitates that $K_{ \eta( f, L ), Y_{ n + \eta( f, L ) - 1 }^y }^n = j$ and $Y_{ n + \eta( f, L ) - 1 }^y = k + j$ on the right-hand side.
Using the shorthand $\eta = \eta(f, L)$, the probability of the event on the left can be written as
\begin{align*}
\binom{ \eta }{ j } y^j ( 1 - y )^{ \eta - j } \binom{ n - 1 }{ k } y^k ( 1 - y )^{ n - 1 - k } = \frac{ \binom{ \eta }{ j } \binom{ n + \eta - 1 - \eta }{ k + j - j } }{ \binom{ n + \eta - 1 }{ k + j } } \binom{ n + \eta - 1 }{ k + j } y^{ k + j } ( 1 - y )^{ n + \eta - 1 - k - j },
\end{align*}
which is the claimed probability of the event on the right.
Intuitively, \eqref{binomial_hypergeometric_identity} expresses the fact that flipping $\eta$ coins and $n - 1$ coins in separate groups is statistically identical to flipping $n + \eta - 1$ coins and randomly allocating them into groups of size $\eta$ and $n - 1$ afterwards.
Using \eqref{binomial_hypergeometric_identity}, the drift of the Wright--Fisher diffusion \eqref{wf_diffusion} satisfies
\begin{align*}
&\sigma ( f( y ) - y ) \partial_y H( y, \bm{ v } ) \\
&= \sigma \bbE[ \mathds{ 1 }\{ f_L( \bar{ X }_{ \eta( f, L ) }( y ) ) \geq 1 / 2 \} - y ] n \bbE[ v_{ Y_{ n - 1 }^y + 1 } - v_{ Y_{ n - 1 }^y } ] \\
&= n \sigma \bbE[ \mathds{ 1 }\{ f_L( \bar{ X }_{ \eta( f, L ) }( y ) ) \geq 1 / 2 \} v_{ Y_{ n - 1 }^y + 1 } \\
&\phantom{= n \sigma \bbE[}- \mathds{ 1 }\{ f_L( \bar{ X }_{ \eta( f, L ) }( y ) ) \geq 1 / 2 \} v_{ Y_{ n - 1 }^y } - y  v_{ Y_{ n - 1 }^y + 1 } + y v_{ Y_{ n - 1 }^y } + v_{ Y_{ n - 1 }^y } - v_{ Y_{ n - 1 }^y }  ] \\
&=  n \sigma ( \bbE[ \mathds{ 1 }\{ f_L( \bar{ X }_{ \eta( f, L ) }( y ) ) \geq 1 / 2 \} v_{ Y_{ n - 1 }^y + 1 } + \mathds{ 1 }\{ f_L( \bar{ X }_{ \eta( f, L ) }( y ) ) < 1 / 2 \} v_{ Y_{ n - 1 }^y } ] - H( y, \bm{v} ) ) \\
&= n \sigma \Bigg( \sum_{ k = 1 }^{ \infty } \Big( \frac{ 3 }{ 4 } \Big)^{ k - 1 } \frac{ 1 }{ 4 } \sum_{ j = 0 }^{ n + \eta( f, k ) - 1 } \bbE\Bigg[ \mathds{ 1 }\Bigg\{ f_k\Bigg( \frac{ K_{ \eta( f, k ), j }^n }{ \eta( f, k ) } \Bigg) \geq 1 / 2 \Bigg\} v_{  j - K_{ \eta( f, k ), j }^n + 1 } \\
&\phantom{\ldots \ldots \ldots }+ \mathds{ 1 }\Bigg\{ f_k\Bigg( \frac{ K_{ \eta( f, k ), j }^n }{ \eta( f, k ) } \Bigg) < 1 / 2 \Bigg\} v_{ j - K_{ \eta( f, k ), j }^n } \Bigg] \binom{ n + \eta( f, k ) - 1 }{ j } y^j ( 1 - y )^{ n + \eta( f, k ) - 1  - j }  - H( y, \bm{v} ) \Bigg),
\end{align*}
which is precisely \eqref{branching_mechanism} applied to the $\bm{v}$-argument of $H( y, \bm{v} ) = \langle \bm{B}_{ \text{dim}( \bm{v} ) - 1 }( y ), \bm{v} \rangle$.
\end{proof}

\section{The Allen--Cahn model of spatial genetics} \label{allen_cahn}

The Allen--Cahn equation on a Lipschitz domain $\Omega \subseteq \bbR^d$ is
\begin{equation}\label{allen_cahn_general}
\partial_t u - \Delta u = \lambda f(u), 
\end{equation}
for a given $\lambda > 0$ and $f : [ 0, 1 ] \mapsto [ 0, 1 ]$, and subject to suitable initial and boundary conditions.
In \cite{etheridgeetal:2022}, the authors consider a model of the spatially structured frequency $u( x, t )$ of an allele $A$ governed by
\begin{align}
\partial_t u - \Delta u &= \frac{ 1 }{ \varepsilon^2 } u ( 1 - u ) ( 2 u - 1 + \nu \varepsilon ) \text{ for } x \in \Omega, t > 0, \label{allen_cahn_equation}\\
\partial_n u &= 0 \text{ for } x \in \partial \Omega, t > 0, \notag \\
u(x, 0) &= u_0( x ) \text{ for } x \in \Omega, \notag
\end{align}
where $\varepsilon > 0$, $\nu > 0$, $\partial_n$ denotes the normal derivative at the boundary, and $u_0 : \Omega \to [0, 1]$ (see also \cite{gooding:2018}).
They construct a solution to \eqref{allen_cahn_equation} by using a particle system in which a single particle started at a location $x \in \Omega$ undergoes a Brownian motion with speed 2, branches into three independent copies at rate $( 1 + \varepsilon \nu) / \varepsilon^2$, and particles are reflected from the boundary.
At a given end time $t > 0$, a leaf particle at location $z \in \Omega$ samples one of two alleles, $a$ and $A$, with respective probabilities $( 1 - u_0( z ), u_0( z ) )$.
All particles sample their alleles independently.
Then, particles propagate their alleles rootwards along the realisation of the Brownian tree.
The allele of an internal branch is decided by a majority vote among its three children unless exactly one child carries allele $A$, in which case the parent branch carries $A$ with probability $2 \nu \varepsilon / ( 3 + 3 \nu \varepsilon )$, and $a$ otherwise.
The probability that the root particle $x \in \Omega$ carries allele $A$ under these dynamics solves \eqref{allen_cahn_equation} \cite[Proposition 2.4]{etheridgeetal:2022}.

The ingredients of the particle system can be read off from \eqref{allen_cahn_equation}.
The Laplacian $\Delta$ is the generator of Brownian motion run at speed 2, the branching rate $( 1 + \varepsilon \nu) / \varepsilon^2$ is an upper bound for the right-hand side, and 
\begin{equation*}
\frac{ 1 }{ \varepsilon^2 } u ( 1 - u ) ( 2 u - 1 + \nu \varepsilon ) = \frac{ 1 + \nu \varepsilon }{ \varepsilon^2 } \Bigg( u^3 + 3 u^2( 1 - u ) + \frac{ 2 \nu \varepsilon }{ 3 + 3 \nu \varepsilon } 3 u ( 1 - u )^2 - u  \Bigg),
\end{equation*}
which demonstrates that the nonlinearity in \eqref{allen_cahn_equation} has an interpretation as the voting system described above.
It would be straightforward to adapt the proof of \cite[Proposition 2.4]{etheridgeetal:2022} to any other polynomial right-hand side of the form
\begin{equation*}
\lambda \sum_{ j = 0 }^m p_j \binom{ m }{ j } u^j (1 - u)^{ m - j } - u,
\end{equation*}
for some $m \in \bbN$, $\lambda > 0$, and coefficients $p_j \in [ 0, 1 ]$ by setting $\lambda$ as the branching rate into $m$ particles, and suitably adapting the voting scheme.
Our contribution is to prove an analogous result for \eqref{allen_cahn_general}, subject to the same initial and boundary conditions as \eqref{allen_cahn_equation}, when $f$ is merely continuous and polynomially bounded.

Consider a branching Brownian motion reflected off the boundary $\partial \Omega$, with branching rate $\lambda$, started from a single particle at $x \in \Omega$ at time $0$.
At a branching event, the number of offspring is given by $\eta( f, L )$, where $L \sim \text{Geo}( 1 / 4 )$.
At a terminal time $t > 0$, a leaf particle at $z \in \Omega$ samples allele $a$ (resp.\ $A$) with probability $1 - u_0( z )$ (resp.\ $u_0( z )$), and each leaf carries out this choice independently.
Alleles are propagated rootwards along the tree: a branch with $\eta( f, k )$ offspring, $j$ of whom carry allele $A$, carries allele $A$ if $f_k( j / \eta( f, k ) ) \geq 1/2$, and carries allele $a$ otherwise.
Let $F(x, t)$ be the allele carried by the root particle at position $x \in \Omega$ when the branching process is run until time $t > 0$.

\begin{thm}
Suppose $f$ is continuous and polynomially bounded as in \eqref{polynomially_bounded}.
Viewed as a function of $x \in \Omega$, the probability $\bbP( F( x, t ) = A )$ solves \eqref{allen_cahn_general} subject to the initial and boundary conditions in \eqref{allen_cahn_equation}.
\end{thm}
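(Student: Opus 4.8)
The plan is to reproduce the McKean-type probabilistic representation that \cite{etheridgeetal:2022} use for the cubic nonlinearity in \eqref{allen_cahn_equation}, but with the bounded three-way voting rule replaced by the Keane--O'Brien voting mechanism, so that the series identity \eqref{pls_expansion} does the work of recovering a general continuous, polynomially bounded $f$. Write $w( x, t ) := \bbP( F( x, t ) = A )$, and let $( P_t )_{ t \geq 0 } = ( e^{ t \Delta } )_{ t \geq 0 }$ denote the heat semigroup on $\Omega$ with Neumann boundary conditions, i.e.\ the transition semigroup of reflected Brownian motion with generator $\Delta$. The strategy is to derive a Duhamel (renewal) equation for $w$ by a first-branching decomposition, identify its nonlinearity with $f$ via \eqref{pls_expansion}, and recognise the resulting integral equation as the mild formulation of the Allen--Cahn equation.

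First I would establish that $F( x, t )$ is well defined, which amounts to showing that the reflected branching Brownian motion does not explode on $[ 0, t ]$. This is the main obstacle, and the principal point of departure from \cite{etheridgeetal:2022}, where every branch produces exactly three offspring: here a branch produces $\eta( f, L )$ offspring with $L \sim \operatorname{Geo}( 1 / 4 )$, and $\eta( f, k )$ may grow with $k$. Viewed as a continuous-time Markov branching process with branching rate $\lambda$ and offspring law $\bbP( L = k ) = ( 3 / 4 )^{ k - 1 }( 1 / 4 )$, the mean number of particles alive at time $t$ equals $e^{ \lambda ( m - 1 ) t }$ with $m = \bbE[ \eta( f, L ) ] = \sum_{ k \geq 1 } ( 3 / 4 )^{ k - 1 }( 1 / 4 ) \eta( f, k )$; when $m < \infty$ the tree is a.s.\ finite and $F( x, t )$ is a genuine random variable. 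Since the Keane--O'Brien construction controls only the convergence of \eqref{pls_expansion} and not the growth of $\eta( f, k )$, I expect the worst case $m = \infty$ to require a separate argument: truncate the offspring law at level $K$ (assigning the residual mass $( 3 / 4 )^K$ to a non-branching event), obtain a non-explosive process with bounded offspring whose effective nonlinearity $\Phi^{ ( K ) }$ is a partial sum of \eqref{pls_expansion}, solve the problem for each $K$, and pass to the monotone limit $K \to \infty$ using that \eqref{pls_expansion} converges uniformly in the frequency argument.

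Next I would condition on the first branching event. With probability $e^{ - \lambda t }$ no branch occurs on $[ 0, t ]$, the single particle diffuses from $x$ to its terminal position, and the root inherits $A$ with probability $( P_t u_0 )( x )$; otherwise the first branch occurs at some $s \in ( 0, t )$, the root particle diffuses to a position distributed as $P_s$, and the $\eta( f, L )$ subtrees each independently return $A$ with probability $w( \cdot, t - s )$, after which the root votes $A$ according to $\mathds{ 1 }\{ f_L( \bar X_{ \eta( f, L ) }( \cdot ) ) \geq 1 / 2 \}$. Writing $\Phi( q )$ for the probability that the root carries $A$ when each child independently carries $A$ with probability $q$, averaging over $L$ and the induced $\Bin( \eta( f, L ), q )$ count and comparing with \eqref{pls_expansion} yields the key identity $\Phi( q ) = f( q )$. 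Collecting the two cases gives
\begin{equation*}
w( x, t ) = e^{ - \lambda t } ( P_t u_0 )( x ) + \lambda \int_0^t e^{ - \lambda s } \big( P_s f( w( \cdot, t - s ) ) \big)( x ) \, \mathrm{d}s .
\end{equation*}

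Finally I would recognise this equation as the mild formulation of a semilinear Neumann problem. Substituting $r = t - s$ and setting $S_t := e^{ - \lambda t } P_t = e^{ t ( \Delta - \lambda ) }$ exhibits $w$ in the standard form $w( t ) = S_t u_0 + \int_0^t S_{ t - r }[ \lambda f( w( r ) ) ] \, \mathrm{d}r$, which identifies the generator as $\Delta - \lambda$ and the inhomogeneity as $\lambda f( w )$, so that the forcing of the governing equation is $\lambda ( f( w ) - w )$, the branching-and-voting normalisation of \eqref{allen_cahn_general} that specialises to \eqref{allen_cahn_equation} when $f$ is the cubic voting polynomial. Since $w$ takes values in $[ 0, 1 ]$ and $f$ is continuous and hence bounded there, the Nemytskii map $f( \cdot )$ is a bounded continuous map on $C( \bar\Omega )$, and the smoothing properties of the Neumann heat semigroup on the Lipschitz domain $\Omega$ give a space-time H\"older continuous mild solution; a standard parabolic bootstrap then upgrades this to a classical solution subject to the initial and boundary conditions of \eqref{allen_cahn_equation}. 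The probabilistic quantity $w = \bbP( F( \cdot, t ) = A )$ therefore solves \eqref{allen_cahn_general}, completing the argument.
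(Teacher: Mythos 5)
Your proposal is correct in its essentials and rests on the same probabilistic core as the paper's proof: decompose according to the first branching time $S$, use the branching property so that each subtree independently returns $A$ with probability $w(\cdot, t-s)$, and invoke the series expansion \eqref{pls_expansion} to identify the root's voting probability $\Phi(q)$ with $f(q)$; both arguments arrive at the forcing $\lambda(f(w)-w)$. Where you diverge is in the analytic finish: the paper conditions on $\{S>h\}$ versus $\{S\leq h\}$ over an infinitesimal window, divides by $h$, and differentiates directly, implicitly assuming $q$ is regular enough for $\Delta q$ to exist; you instead integrate the decomposition over all of $[0,t]$ to get the Duhamel identity $w(t)=S_t u_0+\int_0^t S_{t-r}[\lambda f(w(r))]\,\mathrm{d}r$ with $S_t=e^{t(\Delta-\lambda)}$, and then bootstrap the mild solution to a classical one via parabolic smoothing. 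Your route is somewhat more careful about regularity, at the cost of invoking semigroup theory on a Lipschitz domain with Neumann conditions, which is itself nontrivial. You also raise a point the paper passes over entirely: well-definedness of $F(x,t)$ requires non-explosion of the branching tree, and since the Keane--O'Brien construction places no a priori bound on the growth of $\eta(f,k)$, the offspring mean $\bbE[\eta(f,L)]$ can be infinite, in which case explosion in finite time is possible and the rootward voting procedure may not terminate. Flagging this is a genuine contribution; however, your proposed fix is only sketched --- the limit $K\to\infty$ of the truncated processes is not obviously monotone (adding branches does not monotonically favour allele $A$), and for merely continuous $f$ the mild equation need not have a unique solution, so identifying the limit of $w^{(K)}$ with $\bbP(F(x,t)=A)$ requires an argument (e.g.\ uniform convergence of the nonlinearities plus a compactness/stability step, or a direct proof that the tree is a.s.\ finite) that you have not supplied. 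Subject to tightening that step, the proof goes through.
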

\begin{proof}
The proof is an adaptation of that of \cite[Proposition 2.4]{etheridgeetal:2022} to our more general setting.
To verify that $q(x, t) := \bbP( F( x, t ) = A )$ solves \eqref{allen_cahn_general} on the interior of $\Omega$, let $S$ denote the first branching time of the initial particle and $( W_t )_{ t \geq 0 }$ be a Brownian motion.
For small $h > 0$,
\begin{align*}
&q( x, t + h ) \\
&= \bbP( F( x, t + h ) = A | S > h ) \bbP( S > h ) + \bbP( F( x, t + h ) = A | S \leq h ) \bbP( S \leq h ) \\
&= \bbE_x[ q( W_h, t ) | S > h ] \bbP( S > h ) + \bbP( S \leq h ) \sum_{ k = 1 }^{ \infty } \Big( \frac{ 3 }{ 4 } \Big)^{ k - 1 } \frac{ 1 }{ 4 } \sum_{ j = 0 }^{ \eta( f, k ) } \mathds{ 1 } \Bigg\{ f_k\Big( \frac{ j }{ \eta( f, k ) } \Big) \geq \frac{ 1 }{ 2 } \Bigg\} \binom{ \eta( f, k ) }{ j } \\
&\phantom{= \bbE_x[ q( W_h, t ) | S > h ] \bbP( S > h ) + \bbP( S \leq h )} \times \bbE_x[ q( W_S, t + h - S )^j ( 1 - q( W_S, t + h - S  ) )^{ \eta( f, k ) - j } | S \leq h ]  \\
&= \bbE_x[ q( W_h, t ) | S > h ] e^{ - \lambda h } + \bbE_x[ f( q( W_S, t + h - S ) ) | S \leq h ] ( 1 - e^{ - \lambda h } ),
\end{align*}
where the subscript in $\bbE_x$ denotes the starting point of $( W_t )_{ t \geq 0 }$ and the last equality follows via \eqref{pls_expansion}.
Since $f$ is continuous, regularity of the heat semigroup and the tower law (viewing $q(x, t)$ as the expectation of an indicator function) yield
\begin{equation*}
\bbE_x[ f( q( W_S, t + h - S ) ) | S \leq h ] =  f( \bbE_x[ q( W_S, t + h - S ) | S \leq h ] ) + O( h ) = f( q( x, t ) ) + O( h ).
\end{equation*}
Hence,
\begin{align*}
\partial_t \bbP( F( x, t ) = A ) &= \lim_{ h \to 0 } \frac{ \bbE_x[ \bbP( F( W_h, t ) = A ) | S > h ] - \bbP( F( x, t ) = A ) }{ h } e^{ - \lambda h } \\
&\phantom{=} + \lim_{ h \to 0 } \frac{  f( \bbP( F( x, t ) = A ) ) - \bbP( F( x, t ) = A ) }{ h } ( 1 - e^{ - \lambda h } ) \\
&= \Delta \bbP( F( x, t ) = A ) + \lambda [ f( \bbP( F( x, t ) = A ) ) - \bbP( F( x, t ) = A ) ].
\end{align*}
The boundary condition is inherited from reflecting Brownian motion \cite{bass/hsu:1991} since branching events occur at a finite rate and hence will not take place on the boundary.
\end{proof}

\section{Discussion}\label{discussion}

In \cite{corderoetal:2022}, the authors prove analogues of Theorems \ref{wf_theorem}--\ref{duality_theorem} for the case
\begin{equation}\label{cordero_drift}
f( x ) - x := \sum_{ \ell = 2 }^m \beta_{ \ell } \sum_{ i = 0 }^{ \ell } \binom{ \ell }{ i } x^i ( 1 - x )^{ \ell - i } \Big( p_{ i, \ell } - \frac{ i }{ \ell } \Big),
\end{equation}
for a fixed $m \in \bbN$, positive coefficients $\{ \beta_{ \ell } \}_{ \ell = 2 }^m$, and a sequence of $[0, 1]$-valued coefficients $\{ \{ p_{ i, \ell } \}_{ i = 0 }^{ \ell } \}_{ \ell = 2 }^m$.
Earlier work by Gonz\'alez Casanova and Span\`o also covered the case
\begin{equation*}
f( x ) := \sum_{ j = 0 }^{ \infty } \pi_j x^j,
\end{equation*}
where $\{ \pi_j \}_{ j = 0 }^{ \infty }$ is a probability mass function \cite{casanova/spano:2018}.
Our results cover all Lipschitz continuous, polynomially bounded functions $f : [ 0, 1 ] \mapsto [ 0, 1 ]$, which includes both of these classes as special cases.
Furthermore, \cite[Section 2.10]{corderoetal:2022} mentions that their approach should extend to the $m = \infty$ case.
Our Bernoulli factory approach demonstrates that this is true, and also that there is no further difficulty in handling our more general class of drift functions.

The works of \cite{casanova/spano:2018} and \cite{corderoetal:2022} are more general than our results in two ways: they tackle sequences of drift functions $f_N \to f$ as $N \to \infty$, and they incorporate jumps in the limiting forward-in-time Wright--Fisher diffusion, along with multiple mergers into the reverse-time ancestral process.
Both of these generalisations could be incorporated into our model at the cost of increased technicality.
We have chosen to omit them to focus on the class of drift functions, which is our main interest.

The link our work establishes between individual-based models and diffusive scaling limits provides rigorous justification for a range of diffusive approximations which have been obtained for non-neutral finite-population models \cite{taylor/nowak:2006, lessard/ladret:2007}.
In addition, we provide an associated genealogical description and a formal duality relation between the two processes.
Genealogical processes of Wright--Fisher diffusions with frequency-dependent selection are also considered in \cite{coop/griffiths:2004}, though their approach is to condition on the allele frequency trajectory and hence avoid the need for branching events in the ancestral process.
Their setting is formulated for generic drift functions of the form $\beta(x) x (1 - x)$, though in practice they focus on Bernstein polynomials of degree no more than two.

Approaches based on moment duality, with duality function $h(x, n) = x^n$, have been used to obtain series expansions of Wright--Fisher diffusion transition functions \cite{barbouretal:2000}, and \cite{corderoetal:2022} uses the Bernstein duality in \eqref{bernstein_duality} and the Bernstein coefficient process $( \mathbf{ V }_t )_{ t \geq 0 }$ to study fixation for drifts of the form \eqref{cordero_drift}.
It is unclear whether similar results can be usefully obtained in our setting in practice because the sequences $\{ f_k \}_{ k \geq 1 }$ and $\{ \eta( f, k ) \}_{ k \geq 1 }$ are difficult to compute.
Hence, so are the Bernstein coefficients.

The construction of the solution to the Allen--Cahn equation \eqref{allen_cahn_general} from a branching Brownian motion is also an example of duality between these two processes.
This result could also be generalised by replacing the Laplacian with a more general second order differential operator, provided that it has suitable regularity and generates a diffusion with tractable reflecting behaviour at boundaries.

The key advantage of the Keane--O'Brien factory is that it covers the whole class of continuous, polynomially bounded functions $f$.
Other factories could be used to obtain different ancestral processes, Wright--Fisher diffusions, and constructions of the solution to the Allen--Cahn equation.
However, it is essential that the number of coins needed by the factory is independent of the realisations of those coins, or at least that there is a almost surely finite upper bound on the number of coins regardless of realisations.
Otherwise, the trick of keeping track of all resulting branches to fill in alleles (or votes) later cannot work, because the number of branches cannot be determined until earlier alleles have been resolved.
As far as we are aware, the only other somewhat general Bernoulli factory with this independence property is that of \cite{mendo:2019}, which is identical to the branching mechanism used in  \cite{casanova/spano:2018} and applies to exactly the same functions.
Seminal Bernoulli factories, such as that of Nacu and Peres \cite{nacu/peres:2005} based on Bernstein polynomial approximation of $f$, inherently link the decision to keep flipping coins to the outcomes of earlier coins.

Finally, there are multivariate analogues of Bernoulli factories, in which independent, $m$-sided dice with probability mass function $( p_1, \ldots, p_m )$ are used to construct a $v$-sided die with mass function $f( p_1, \ldots, p_m )$. 
To date, attention has focused on domains which exclude boundaries, and where the coordinates of $f$ are rational functions \cite{morina:2021, morinaetal:2022}, or where $v = 1$ \cite{leme/schneider:2023}.
We believe a suitable extension of such \emph{dice enterprises} to cases including boundaries could be used to obtain analogues of the convergence and duality results presented here in the case with more than two non-neutral alleles.

\section*{Funding and data sharing statements}

JK was supported by EPSRC research grant EP/V049208/1.
K{\L} was supported by the Royal Society through the Royal Society University Research Fellowship.
Data sharing is not applicable to this article as no new data were generated or analysed.

\bibliographystyle{alpha}
\bibliography{bibliography}  

\end{document}